\title{On the Diophantine equation $p^x + p^y = z^{2n}$}
\author{Dibyajyoti Deb}
\address{Department of Mathematics, Oregon Institute of Technology, Klamath Falls, OR, USA}
\email{dibyajyoti.deb@oit.edu}
\keywords{Diophantine equation, Exponential equation}
\subjclass[2010]{11D61}
\theoremstyle{plain}
\newtheorem{theorem}{Theorem}[section]
\newtheorem{lemma}[theorem]{Lemma}
\theoremstyle{definition}
\begin{document} 

\begin{abstract}
In \cite{TS1}, Tatong and Suvarnamani explores the Diophantine equation $p^x + p^y = z^2$ for a prime number $p$. In that paper they find some solutions to the equation for $p=2, 3$. In this paper, we look at a general version of this equation and solve it completely. 
\end{abstract}

\maketitle

\section{Introduction} 

Diophantine equations of the form $a^x + b^y = z^2$ have been studied in recent papers by various authors. In this paper we use elementary methods to completely solve the equation $p^x + p^y = z^{2n}$ in non-negative integers where $p$ is a prime number. 

\section{Preliminaries}
\begin{theorem}\label{catalan}
The Diophantine equation $a^x - b^y = 1$ has a unique solution in integers $a, b, x,$ and $y$ with min$\{a, b, x, y \} > 1$. This solution is given by $(3, 2, 2, 3)$.
\end{theorem}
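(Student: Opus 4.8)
The plan is to first reduce to the case where the exponents are prime, and then to dispose of the resulting cases according to whether those primes are $2$ or odd. If $(a,b,x,y)$ solves $a^x - b^y = 1$ with $\min\{a,b,x,y\} > 1$, choose a prime $p \mid x$ and a prime $q \mid y$ and set $u = a^{x/p}$, $v = b^{y/q}$; then $u^p - v^q = 1$ with $u, v \ge 2$ and $p, q$ prime. It therefore suffices to list all solutions of $u^p - v^q = 1$ in integers $u, v \ge 2$ with $p, q$ prime and then to lift them: $u = a^{x/p} = 3$ forces $a = 3$, $x = p = 2$, and $v = b^{y/q} = 2$ forces $b = 2$, $y = q = 3$, which recovers $(3,2,2,3)$ and shows it is the only possibility.

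For the prime-exponent equation I would argue case by case. If $p = q = 2$ then $(u-v)(u+v) = 1$, impossible for $u, v \ge 2$. If exactly one exponent is $2$, say $p = 2$ and $q$ an odd prime, there are two subcases. The equation $v^q - u^2 = 1$, i.e.\ $u^2 + 1 = v^q$, has no solution with $u, v \ge 2$ by Lebesgue's 1850 theorem (a short argument factoring $u^2+1$ in $\mathbb{Z}[i]$). The equation $u^2 - v^q = 1$ gives $(u-1)(u+1) = v^q$; controlling $\gcd(u-1,u+1)\in\{1,2\}$ reduces this, when $q = 3$, to Euler's classical determination of $X^2 - Y^3 = 1$ (whose only positive solution is $(3,2)$), and when $q \ge 5$ to Ko Chao's theorem (no solutions). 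Hence in the ``one exponent equal to $2$'' case the only solution is $(u,v,p,q) = (3,2,2,3)$.

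The substantive case is $p$ and $q$ both odd primes — this is Catalan's equation itself, and it is where the real difficulty lies. Here I would appeal to the cyclotomic theory underlying Mihailescu's theorem: Cassels' divisibility relations ($q \mid u$ and $p \mid v$), the double-Wieferich congruences $p^{q-1} \equiv 1 \pmod{q^2}$ and $q^{p-1} \equiv 1 \pmod{p^2}$, and finally Mihailescu's argument via Stickelberger's relation and the structure of the minus part of the ideal class group of $\mathbb{Q}(\zeta_p)$, which together show the equation has no solution at all. I do not expect an elementary treatment of this case to be possible; a self-contained proof would amount to reproducing Mihailescu's work, so in practice I would cite it. Combining all the cases, the unique solution with $\min\{a,b,x,y\} > 1$ is $(a,b,x,y) = (3,2,2,3)$, since $3^2 - 2^3 = 1$.
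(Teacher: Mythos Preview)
Your proposal is correct, and at its core it coincides with the paper's: the paper's entire proof is the sentence ``This is the famous Catalan's Conjecture \ldots\ proven by Mihailescu. For the proof see \cite{Mih}.'' You go further by separating out the reduction to prime exponents and disposing of the cases with an exponent equal to $2$ via the classical results of Lebesgue, Euler, and Ko Chao, reserving the citation of Mihailescu only for the genuinely hard odd--odd case; this is more informative but not a different approach, since both arguments bottom out in the same reference. (A minor slip: after writing ``say $p=2$ and $q$ an odd prime'' you then treat $v^q - u^2 = 1$, which is really the case $q=2$, $p$ odd; the mathematics is right but the labels are swapped.)
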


\begin{proof}
This is the famous Catalan's Conjecture (which can be also called Mihailescu's Theorem) that was proven by Mihailescu. For the proof see \cite{Mih}.
\end{proof}

\begin{lemma}\label{lem2}
For an odd prime $p > 3$, the Diophantine equation $p^x + 1 = z^2$ has no non-negative integer solutions.
\end{lemma}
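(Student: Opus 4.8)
The plan is to factor $z^2 - 1 = p^x$ as $(z-1)(z+1)$ and exploit that $p$ is an odd prime. First I would dispose of the degenerate cases: if $x = 0$ the equation becomes $z^2 = 2$, which is impossible, and if $z \in \{0,1\}$ then $p^x \le 0$, also impossible; so we may assume $x \ge 1$ and $z \ge 2$, hence $z - 1 \ge 1$.

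Next, from $(z-1)(z+1) = p^x$ I would observe that every prime factor of $z-1$ and of $z+1$ must be $p$, so $z - 1 = p^a$ and $z + 1 = p^b$ for non-negative integers $a \le b$ with $a + b = x$. Since $p$ is odd, $\gcd(z-1,z+1) \mid 2$ forces $\gcd(p^a,p^b) = 1$, i.e.\ $a = 0$. Then $z - 1 = 1$, so $z = 2$ and $z + 1 = 3 = p^b$, which gives $p = 3$, contradicting $p > 3$. This finishes the argument. (Alternatively, for $x \ge 2$ one could invoke Theorem~\ref{catalan}: $z^2 - p^x = 1$ with $\min\{z,p,2,x\} > 1$ would force $(z,p,2,x) = (3,2,2,3)$, impossible since $p$ is odd; but the cases $x \in \{0,1\}$ still need the elementary treatment above, so the factoring approach is the more uniform route.)

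There is essentially no hard step here — the only point requiring a little care is making sure the case analysis on $x$ and $z$ is exhaustive so that the factorization $z-1 = p^a$, $z+1 = p^b$ is actually valid (in particular that $z - 1$ is a genuine positive power of $p$ and not $0$ or a unit in a misleading way). Once that bookkeeping is in place, the parity/gcd observation that $p$ is odd does all the work, and the bound $p > 3$ is exactly what rules out the sporadic solution $(p,x,z) = (3,1,2)$ coming from $3 + 1 = 2^2$.
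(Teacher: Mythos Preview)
Your argument is correct. The paper's own proof takes a slightly different route: after disposing of $x=0$ it observes $z^2 \ge p+1 \ge 6$, hence $z \ge 3$, and then applies Theorem~\ref{catalan} (Mih\u{a}ilescu's theorem) directly to $z^2 - p^x = 1$ to force $x=1$; the remaining case $(z-1)(z+1)=p$ is then dismissed since $p$ is prime and $z \ge 3$. Your main argument is more elementary: you never invoke Catalan at all, instead using the factorization $(z-1)(z+1)=p^x$ together with the parity observation $\gcd(z-1,z+1)\mid 2$ (and $p$ odd) to force $z-1=1$ directly. The advantage of your approach is that it is completely self-contained and does not rely on a deep theorem; the paper's approach has the virtue of reusing the machinery it has already set up for the rest of the article. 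You even sketch the Catalan-based alternative in your parenthetical remark, which is essentially the paper's line of reasoning.
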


\begin{proof}
On the contrary, suppose that there are non-negative integer solutions $(p, x, z)$. If $x = 0$, then $z^2 = 2$ which is not solvable in integers. Therefore, $x \geq 1$. Since $p > 3$, therefore, $z^2 = p^x + 1 \geq 5^1 + 1 = 6$. Thus, $z \geq 3$. We rewrite the equation as $z^2 - p^x = 1$. Since $p > 3$, hence by Theorem \ref{catalan}, $x = 1$. This means $z^2 = 1 + p$ or $(z-1)(z+1) = p$, which is impossible as $p$ is a prime  number and $z \geq 3$. Hence, when $p > 3$, the equation $p^x + 1 = z^2$ has no non-negative integer solutions.
\end{proof}

\section{Main Result}

We now present our main result.

\begin{theorem} \label{mainthm}
If $p$ is a prime number and $n \geq 1$ is an integer, then the Diophantine equation 
\begin{equation*}
p^x + p^y = z^{2n}
\end{equation*}
has the following set of solutions $(x, y, z)$ in non-negative integers.

\noindent
For $n=1$,
$$(x, y, z) =
\begin{cases}
(2s+3, 2s, 3 \cdot 2^s), (2s, 2s+3, 3 \cdot 2^s), (2s+1, 2s+1, 2^{s+1}), & p = 2, \text{ and } s \in \mathbb{N} \cup \{0\} \\
(2s+1, 2s, 2 \cdot 3^s), (2s, 2s+1, 2 \cdot 3^s), & p = 3, \text{ and } s \in \mathbb{N} \cup \{0\} \\
\text{ No solutions}, & p > 3
\end{cases}
$$
For $n>1$,
$$(x, y, z) =
\begin{cases}
(2s+1, 2s+1, 2^{(s+1)/n}), & p = 2, s \in \mathbb{N} \cup \{0\} \text{ and } s+1 \equiv 0 \text{ mod } n   \\
\text{ No solutions}, & p \geq 3
\end{cases}
$$
\end{theorem}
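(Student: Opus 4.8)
The plan is to use a coprimality factorization to push everything onto Catalan's theorem (Theorem~\ref{catalan}). Since $p^x+p^y=z^{2n}$ is symmetric in $x$ and $y$, I would first assume $x\le y$ and set $y=x+d$ with $d\ge 0$, recording that every solution with $x<y$ contributes the mirror solution $(y,x,z)$ (this is what produces the pairs of families in the statement). Factoring out $p^x$ gives $p^x\bigl(1+p^d\bigr)=z^{2n}$, and the argument then branches on whether $d=0$; note also that $z\ge 2$ since $p^x+p^y\ge 2$.

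\textbf{The case $d=0$.} Here the equation is $2p^x=z^{2n}$, and $p^x$ and $2$ fail to be coprime exactly when $p=2$, so I would split off $p=2$. If $p=2$, then $2^{x+1}=z^{2n}$, so $z=2^k$ for some $k\ge 1$ and $x+1=2nk$; thus $x$ is odd, $x=2s+1$, and $s+1=nk$, i.e.\ $n\mid s+1$ with $z=2^{(s+1)/n}$ --- exactly the listed family (specializing to $(2s+1,2s+1,2^{s+1})$ when $n=1$). If $p$ is odd, the $2$-adic valuation of $2p^x$ equals $1$, whereas $v_2(z^{2n})$ is either $0$ or a multiple of $2n\ge 2$; hence no solution, giving ``no solutions'' for $p\ge 3$ in this case.

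\textbf{The case $d\ge 1$.} Now $1+p^d\equiv 1\pmod p$, so $\gcd(p^x,1+p^d)=1$, and since their product is a perfect $2n$-th power, each factor is a perfect $2n$-th power. Writing $p^x=a^{2n}$ (forcing $2n\mid x$, with $a$ a power of $p$) and $1+p^d=b^{2n}$, $z=ab$, $b\ge 2$, I reduce to solving $b^{2n}-p^d=1$. If $d=1$ this says $p=(b^n-1)(b^n+1)$, and primality forces $b^n-1=1$, hence $n=1$, $b=2$, $p=3$, giving the family $(2s,2s+1,2\cdot 3^s)$ and its mirror. If $d\ge 2$ then $b,\,p,\,2n,\,d$ are all $\ge 2$, so Theorem~\ref{catalan} applies and forces $b=3$, $2n=2$, $p=2$, $d=3$; hence $n=1$ and, since $2\mid x$, the family $(2s,2s+3,3\cdot 2^s)$ and its mirror. (For $n=1$, Lemma~\ref{lem2} gives an alternative way to kill $p>3$ when $x=0$.) Assembling: for $n\ge 2$ both sub-cases of $d\ge 1$ are impossible, leaving only the $p=2$, $d=0$ family; for $n=1$ we recover precisely the three $p=2$ families, the two $p=3$ families, and nothing for $p>3$.

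I do not expect a genuine obstacle: the sole non-elementary ingredient is the appeal to Theorem~\ref{catalan}, and everything else is elementary valuation and coprimality arithmetic. The part needing the most care is bookkeeping --- verifying that the parametrizations, including the boundary value $s=0$ (which is the case $x=0$, e.g.\ $1+2^3=3^2$ and $1+3=2^2$), match the statement exactly, and keeping the small-exponent situation $d=1$ (where Catalan's hypotheses fail) cleanly separated from $d\ge 2$ (where Catalan is invoked with its unique solution $3^2-2^3=1$).
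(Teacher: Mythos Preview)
Your proof is correct and uses the same core ingredients as the paper (factor out the smaller power of $p$, reduce to an equation of the shape $1+p^d=\text{perfect power}$, and invoke Catalan/Mih\u{a}ilescu), but your organization is genuinely different. The paper proves the case $n=1$ first (Theorem~\ref{thmequal1}), by the same factorization and a case split on $p=2$, $p=3$, $p>3$ and on whether $y-x=1$ or $y-x>1$; it then handles $n>1$ separately (Theorem~\ref{thmgreater1}) by the substitution $w=z^n$, reducing to the already-classified $n=1$ solutions and checking which of the resulting $w$-values ($3\cdot 2^s$, $2^{s+1}$, $2\cdot 3^s$) can be perfect $n$-th powers. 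You instead treat all $n\ge 1$ in one pass: the coprimality of $p^x$ and $1+p^d$ forces each to be a $2n$-th power outright, and Catalan then pins down $n$ along with $p$ and $d$. Your route is shorter and avoids the two-stage reduction; the paper's route has the advantage that the $n=1$ classification is isolated as a standalone result and the $n>1$ argument is a quick corollary. Your explicit separation of $d=1$ (where Catalan's hypotheses fail and you use the elementary factorization $(b^n-1)(b^n+1)=p$) from $d\ge 2$ is also a bit cleaner than the paper's parallel sub-cases for $p=2$ and $p=3$.
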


\noindent
The proof of this theorem combines the proofs of the next two theorems.

\begin{theorem}\label{thmequal1}
If $p$ is a prime number then the Diophantine equation 
\begin{equation}\label{maineq}
p^x + p^y = z^2
\end{equation}
has the following set of solutions $(x, y, z)$ in non-negative integers.

\noindent
$$(x, y, z) =
\begin{cases}
(2s+3, 2s, 3 \cdot 2^s), (2s, 2s+3, 3 \cdot 2^s), (2s+1, 2s+1, 2^{s+1}), & p = 2, \text{ and } s \in \mathbb{N} \cup \{0\} \\
(2s+1, 2s, 2 \cdot 3^s), (2s, 2s+1, 2 \cdot 3^s), & p = 3, \text{ and } s \in \mathbb{N} \cup \{0\} \\
\text{ No solutions}, & p > 3
\end{cases}
$$
\end{theorem}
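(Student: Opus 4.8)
The plan is to reduce everything to the question of when $1 + p^d$ is a perfect square. Since the equation is symmetric in $x$ and $y$, I would first assume without loss of generality that $x \le y$ and set $d = y - x \ge 0$; every solution with $x > y$ is then recovered by swapping coordinates. With this normalization the equation factors as $p^x(1 + p^d) = z^2$.

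For the case $d = 0$ the equation becomes $2 p^x = z^2$. If $p$ were odd, the $2$-adic valuation of the left side would be $1$, which cannot equal the even valuation of a square; hence $p = 2$, and $2^{x+1} = z^2$ forces $x+1$ to be even, so $x = 2s+1$ and $z = 2^{s+1}$. This produces the family $(2s+1,\,2s+1,\,2^{s+1})$.

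For the case $d \ge 1$, note $1 + p^d \equiv 1 \pmod p$, so $p^x$ and $1 + p^d$ are coprime; since their product is a square, each is itself a square. Thus $x$ is even, say $x = 2s$, and $1 + p^d = b^2$, i.e. $p^d = (b-1)(b+1)$. If $p = 2$, then $b - 1$ and $b + 1$ are powers of $2$ differing by $2$, which forces $\{b-1,\,b+1\} = \{2,\,4\}$, hence $b = 3$ and $d = 3$; together with $2^{x} = (2^s)^2$ this gives $z = 3\cdot 2^s$ and $y = 2s + 3$, the family $(2s,\,2s+3,\,3\cdot 2^s)$. If $p$ is odd, then $\gcd(b-1,\,b+1)$ divides $2$ but is a power of the odd prime $p$, so it equals $1$, forcing $b - 1 = 1$; then $p^d = 3$, so $p = 3$ and $d = 1$, giving $z = 2\cdot 3^s$ and $y = 2s+1$, the family $(2s,\,2s+1,\,2\cdot 3^s)$. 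For $p > 3$ one may instead invoke Lemma \ref{lem2} directly: $1 + p^d = b^2$ has no solution, so this case is vacuous.

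Finally I would assemble the cases and restore the symmetry $x \leftrightarrow y$: for $p > 3$ no case contributes, for $p = 3$ only the $d = 1$ family and its mirror survive, and for $p = 2$ we obtain the $d = 0$ family together with the $d = 3$ family and its mirror --- precisely the list in the statement. Every step is elementary; the one requiring genuine care is the $d \ge 1$ analysis, namely the passage from ``a square times a coprime factor'' to ``both factors are squares'' and then the finite check that two powers of the same prime can differ by exactly $2$ only as $\{2,\,4\}$ (for $p = 2$) or not at all (for $p$ odd). I expect that case analysis, rather than any hard estimate, to be the main point of the argument.
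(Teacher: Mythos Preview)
Your argument is correct, and it takes a genuinely different route from the paper's. After the same reduction to $1 + p^{d} = k^2$ (the paper writes $k$ where you write $b$), the paper appeals to Mih\u{a}ilescu's theorem (Catalan's conjecture, Theorem~\ref{catalan}) to rule out $k^2 - 2^{d} = 1$ for $d > 3$ and $k^2 - 3^{d} = 1$ for $d > 1$, and invokes Lemma~\ref{lem2} (itself resting on Catalan) for $p > 3$. You instead factor $p^{d} = (b-1)(b+1)$ and observe that two powers of the same prime differing by $2$ are pinned down immediately: for $p = 2$ only $\{2,4\}$ works, and for odd $p$ the coprimality of $b-1$ and $b+1$ forces $b - 1 = 1$, hence $p^d = 3$. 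This is entirely elementary and makes the invocation of Mih\u{a}ilescu's theorem unnecessary; indeed your odd-$p$ argument already subsumes Lemma~\ref{lem2}, so the reference to it is redundant. The paper's route has the virtue of showing how Catalan applies as a black box, but yours is both shorter and self-contained.
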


\begin{proof}
This is the case $n= 1$ of Theorem \ref{mainthm}. We consider several cases.

\noindent
{\em Case 1 :} $x = y$. In this case, 
$$p^x + p^y = 2p^x = z^2$$
Thus $z = 2^{1/2}p^{x/2}$. As $p$ is a prime number and $z$ is a non-negative integer, therefore, $p = 2$ and $x$ is an odd non-negative integer. Thus, when $x = 2s+1$ for any non-negative integer $s$, we have $z = 2^{s+1}$. Hence, when $y=x$, the solution set $(x, y, z)$ consists of 3-tuples of the form $(2s+1, 2s+1, 2^{s+1})$ for any $s \in \mathbb{N} \cup \{0\}$.

\enspace

\noindent
{\em Case 2 :} $x \neq y$ and $x<y$. 
In this case,
$p^x(1+p^{y-x}) = z^2$, which implies that $p \mid z$. Let $e$ be the highest power of $p$ that divides $z$, i.e., $p^e \mid z$ but $p^{e+1} \notdivides z$. Suppose $z = p^e k$ with $p \notdivides k$. We then have,
$$p^x(1+p^{y-x}) = (p^e k )^2 = p^{2e}k^2$$
Since $p$ is a prime number and $p \notdivides k$, therefore, $p^x = p^{2e}$ which implies $x = 2e$. We then have,
\begin{equation}\label{eq1}
k^2 = 1 + p^{y-x} = 1 + p^{y-2e}
\end{equation}
We now divide this problem into several cases.

\enspace

\noindent 
{\em Case 2.1 :} $p = 2$ and $y-2e = 1$. In this case, 
$$k^2 = 1 + 2^{y-2e} = 1+ 2 = 3$$
This has no integer solutions in $k$, hence the Diophantine equation $p^x + p^y = z^2$ has no non-negative integer solutions when $p=2$ and $y-x = 1$.

\enspace

\noindent
{\em Case 2.2 :} $p = 2$ and $y-2e > 1$. In this case, since $p=2$ and $y-2e > 1$, therefore, $k^2 = 1 + 2^{y-2e} \geq 5$. Therefore, $k \geq 3$. Rewriting Equation (\ref{eq1}) we have,
$$k^2 = 1 + 2^{y-2e}$$
\begin{equation}\label{eq2}
k^2 - 2^{y-2e} = 1 
\end{equation}
As min$\{k, 2, 2, y-2e \} > 1$, therefore, Equation (\ref{eq2}) has a unique solution $(k, y-2e)$ given by $(3, 3)$ by Theorem \ref{catalan}. Thus, we have $x = 2e, y = 2e+3$ and $z = 2^e \cdot 3$. Hence, when $p = 2$ and $y-x = y - 2e >1$, the solution set $(x, y, z)$ of Equation (\ref{maineq}) consists of 3-tuples of the form $(2s, 2s+3, 3 \cdot 2^s)$ for any $s \in \mathbb{N} \cup \{0\}$.

\enspace

\noindent
{\em Case 2.3 :} $p=3$ and $y-2e = 1$. In this case we have,
$$k^2 = 1 + 3^{y-2e} = 1 + 3 = 4$$
This implies $k = 2$. Thus, $x=2e, y = 2e+1$ and $z = 2 \cdot 3^e$. Thus when $p=3$ and $y-x = y - 2e =1$, the solution set $(x, y, z)$ of Equation (\ref{maineq}) consists of 3-tuples of the form $(2s, 2s+1, 2 \cdot 3^s)$ for any $s \in \mathbb{N} \cup \{0\}$.

\enspace

\noindent
{\em Case 2.4 :} $p = 3$ and $y-2e > 1$. In this case, since $p=3$ and $y-2e > 1$, therefore, $k^2 = 1 + 3^{y-2e} \geq 10$. Therefore, $k \geq 4$. Rewriting Equation (\ref{eq1}) we have,
$$k^2 = 1 + 3^{y-2e}$$
\begin{equation}\label{eq3}
k^2 - 3^{y-2e} = 1 
\end{equation}
As min$\{k, 2, 3, y-2e \} > 1$, therefore, Equation (\ref{eq3}) does not have any non-negative integer solutions by Theorem \ref{catalan}. Thus when $p=3$ and $y-x = y - 2e > 1$, Equation (\ref{maineq}) has no solutions in non-negative integers.

\enspace

\noindent
{\em Case 2.5 :} $p >3$. Looking at Equation (\ref{eq2}),
$$1+p^{y-2e} = k^2$$
The equation has no solutions in non-negative integers by Lemma \ref{lem2}. Thus, when $p>3$, Equation (\ref{maineq}) has no solutions in non-negative integers.

Thus, taking {\em Cases 2.1-2.5} into account we see that the set of solutions $(x, y, z)$ of Equation (\ref{maineq}) when $x < y$ is given by,
$$(x, y, z) =
\begin{cases}
(2s, 2s+3, 3 \cdot 2^s), (2s+1, 2s+1, 2^{s+1}), & p = 2, \text{ and } s \in \mathbb{N} \cup \{0\} \\
(2s, 2s+1, 2 \cdot 3^s), & p = 3, \text{ and } s \in \mathbb{N} \cup \{0\} \\
\text{ No solutions}, & p > 3
\end{cases}
$$

\enspace

\noindent
{\em Case 3 :} $x \neq y$ and $x>y$. 
This case is similar to {\em Case 2} and its various cases. The only difference results in switching the values of $x$ and $y$ in our final answer. Thus the set of solutions $(x, y, z)$ of Equation (\ref{maineq}) when $x > y$ is given by,
$$(x, y, z) =
\begin{cases}
(2s+3, 2s, 3 \cdot 2^s), (2s+1, 2s+1, 2^{s+1}), & p = 2, \text{ and } s \in \mathbb{N} \cup \{0\} \\
(2s+1, 2s, 2 \cdot 3^s), & p = 3, \text{ and } s \in \mathbb{N} \cup \{0\} \\
\text{ No solutions}, & p > 3
\end{cases}
$$
This concludes the proof.

\noindent
 
\end{proof}

\noindent
Now we consider the case $n > 1$.
\begin{theorem} \label{thmgreater1}
If $p$ is a prime number and $n > 1$ is an integer, then the Diophantine equation 
\begin{equation}\label{eqngreater1}
p^x + p^y = z^{2n}
\end{equation}
has the following set of solutions $(x, y, z)$ in non-negative integers.

\noindent
$$(x, y, z) =
\begin{cases}
(2s+1, 2s+1, 2^{(s+1)/n}), & p = 2, s \in \mathbb{N} \cup \{0\} \text{ and } s+1 \equiv 0 \text{ mod } n   \\
\text{ No solutions}, & p \geq 3
\end{cases}
$$
\end{theorem}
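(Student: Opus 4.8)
The plan is to reduce the problem immediately to Theorem \ref{thmequal1} by exploiting that $z^{2n} = (z^n)^2$. Setting $w = z^n$, equation (\ref{eqngreater1}) becomes $p^x + p^y = w^2$, which is exactly equation (\ref{maineq}). Hence every solution $(x,y,z)$ of (\ref{eqngreater1}) gives a solution $(x,y,w)$ of (\ref{maineq}) with $w = z^n$; conversely, a solution $(x,y,w)$ of (\ref{maineq}) arises from a solution of (\ref{eqngreater1}) precisely when $w$ is a perfect $n$-th power of a non-negative integer, in which case $z = w^{1/n}$. So the whole proof consists of taking the solution list supplied by Theorem \ref{thmequal1} and retaining exactly those entries whose $z$-coordinate (now in the role of $w$) is an $n$-th power, with $n > 1$ fixed.

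First I would handle $p \geq 3$. For $p > 3$ there is nothing to check, since Theorem \ref{thmequal1} already yields no solutions. For $p = 3$, the only values of $w$ that occur are $w = 2 \cdot 3^{s}$; since the exponent of the prime $2$ in $w$ equals $1$ and $n > 1$ does not divide $1$, unique factorization shows $w$ is never a perfect $n$-th power, so (\ref{eqngreater1}) has no non-negative integer solutions when $p = 3$. This establishes the $p \geq 3$ row of the table.

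Next I would treat $p = 2$. Here $w$ is either $3 \cdot 2^{s}$ (from the two families $(2s+3,2s,\cdot)$ and $(2s,2s+3,\cdot)$) or $2^{s+1}$ (from $(2s+1,2s+1,\cdot)$). In the first case the exponent of the prime $3$ in $w$ is $1$, so, exactly as above, $w$ is not a perfect $n$-th power for $n > 1$ and these solutions do not lift. In the second case $w = 2^{s+1}$ is a perfect $n$-th power if and only if $n \mid (s+1)$, and then $z = 2^{(s+1)/n}$; this is precisely the family $(2s+1,\,2s+1,\,2^{(s+1)/n})$ subject to $s+1 \equiv 0 \pmod{n}$. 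Collecting the two cases yields the asserted table.

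I do not anticipate a genuine obstacle: the entire content is the identity $z^{2n} = (z^n)^2$ together with the fundamental theorem of arithmetic. The one place that warrants a careful sentence is the converse direction of the reduction — one must not simply re-quote Theorem \ref{thmequal1}, but must intersect its solution set with the $n$-th powers, which is what forces the congruence condition $s+1 \equiv 0 \pmod n$ and eliminates every solution in which the relevant $z$-value has a prime appearing to the first power.
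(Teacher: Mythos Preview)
Your proposal is correct and follows essentially the same approach as the paper: both set $w = z^n$, invoke Theorem \ref{thmequal1} for $p^x + p^y = w^2$, and then check which of the resulting $w$-values are perfect $n$-th powers. Your justification via unique factorization (that the primes $2$ and $3$ appear to exponent $1$ in the mixed forms $2\cdot 3^s$ and $3\cdot 2^s$) is slightly more explicit than the paper's, but the argument is otherwise identical in structure and content.
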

\begin{proof}
{\em Case 1:} $p = 2$.
Suppose that there exists non-negative integers $x, y$ and $z$ that satisfies Equation (\ref{eqngreater1}). Let $w=z^n$. In that case $(x, y, z^n)$ is a solution in non-negative integers of the equation 
\begin{equation}\label{eq4}
2^x + 2^y = w^2
\end{equation}
By Theorem \ref{thmequal1} the only solutions $(x, y, w)$ to Equation (\ref{eq4}) are $(2s+3, 2s, 3 \cdot 2^s), (2s, 2s+3, 3 \cdot 2^s), (2s+1, 2s+1, 2^{s+1})$ for $s \in \mathbb{N} \cup \{0\}$. This means either $w = 3 \cdot 2^s$ or $w = 2^{s+1}$. 

\enspace

\noindent
{\em Case 1.1 :} $w = 3 \cdot 2^s$. This is not possible as $w = z^n = 3 \cdot 2^s$ which is not solvable in integers $z$ for $n > 1$.

\enspace

\noindent
{\em Case 1.2 :} $w = 2^{s+1}$. This means $z^n = 2^{s+1}$ or $z = 2^{(s+1)/n}$. If $n \mid (s+1)$, then $z$ is an integer and we have solutions $(2s+1, 2s+1, 2^{(s+1)/n})$ for Equation (\ref{eqngreater1}).

\enspace

\noindent
{\em Case 2 :} $p \geq 3$. We proceed similarly to {\em Case 1}. Suppose that there exists non-negative integers $x, y$ and $z$ that satisfies Equation (\ref{eqngreater1}). Let $w=z^n$. In that case $(x, y, z^n)$ is a solution in non-negative integers of the equation 
\begin{equation}\label{eq5}
p^x + p^y = w^2
\end{equation}

\enspace

\noindent
{\em Case 2.1 :} $p = 3$. By Theorem \ref{thmequal1}, the only solutions $(x, y, w)$ to Equation (\ref{eq5}) are $(2s+1, 2s, 2 \cdot 3^s), (2s, 2s+1, 2 \cdot 3^s)$ for $s \in \mathbb{N} \cup \{0\}$. This means $w = 2 \cdot 3^s$ for both solutions. However, $z^n = 2 \cdot 3^s$ doesn't have any solutions in integers $z$ for $n > 1$. Therefore   Equation (\ref{eq4}) has no solutions in non-negative integers.

\enspace

\noindent
{\em Case 2.2 :} $p >3$. By Theorem \ref{thmequal1}, Equation (\ref{eq4}) has no solutions $(x, y, w)$ in non-negative integers. Hence Equation (\ref{eq5}) has no solutions $(x, y, z)$ in non-negative integers.

\enspace

\noindent
Combining the results from {\em Case 1} and {\em Case 2} we see the results of Theorem \ref{thmgreater1}.
\end{proof}

\section{Conclusion}
We see that the Diophantine equation $p^x + p^y = z^{2n}$ has infinitely many solutions when $n=1$ and $p=2$ or $p=3$. The equation has no solutions when $p>3$. It also has infinitely many solutions when $n > 1$ for $p=2$. However, the equation does not have any solutions when $p \geq 3$. All these solutions are given in the statement of Theorem \ref{mainthm}.

\end{document}